\subjclass{Primary: 57P10, 57Q45 Secondary:  55Q25, 55P91}
\newtheorem{thm}{Theorem}[section]  
\newtheorem*{un-no-thm}{Theorem}
\newtheorem{cor}[thm]{Corollary}     
\newtheorem{lem}[thm]{Lemma}         
\newtheorem{prop}[thm]{Proposition}  
\newtheorem{bigthm}{Theorem}
\newtheorem{bigcor}[bigthm]{Corollary}
\theoremstyle{definition}
\theoremstyle{definition}
\theoremstyle{definition}
\newtheorem*{ques}{Question}
\theoremstyle{remark}
\newtheorem{rem}[thm]{Remark}        
\newtheorem*{acks}{Acknowledgements}
\newtheorem*{out}{Outline}
\newtheorem*{rems_un}{Remarks}
\newtheorem*{exs}{Examples}
\DeclareMathOperator{\tr}{tr}
\begin{document}
\title[Poincar\'e duality and Periodicity, II.]{Poincar\'e duality and Periodicity, II.\\ James Periodicity}
\date{\today}
\author{John R. Klein}
\address{Dept.\ of Mathematics, Wayne State University, Detroit, MI 48202}
\email{klein@math.wayne.edu}
\author{William Richter}
\address{Dept.\ of Mathematics, Northwestern University, Evanston, IL 60208}
\email{richter@math.northwestern.edu}
\dedicatory{To Bruce Williams on his sixtieth birthday}
\thanks{The author is partially supported by the NSF}
\begin{abstract}
Let $K$ be a connected finite complex. This paper studies the problem of whether one can
attach a cell to some iterated suspension $\Sigma^j K$ so that the resulting
space satisfies Poincar\'e duality. When this is possible, we say that $\Sigma^j K$ is a {\it spine.} 

We introduce the notion of {\it quadratic self duality} and show that if 
$K$ is quadratically self dual, then  
$\Sigma^j K$ is a spine whenever $j$ is a suitable power of  two.
The powers of two come from the
James periodicity theorem.
  
We briefly explain how
our main result, considered up to bordism, gives a new interpretation  of the four-fold
periodicity of the surgery obstruction groups. We therefore obtain a relationship between James
periodicity and the four-fold periodicity in $L$-theory.
\end{abstract}

\maketitle
\setlength{\parindent}{15pt}
\setlength{\parskip}{1pt plus 0pt minus 1pt}
\def\bdot{\bold .}
\def\Top{\bold T\bold o \bold p}
\def\Sp{\bold S\bold p}
\def\vo{\varOmega}
\def\smsh{\wedge}
\def\flush{\flushpar}
\def\id{\text{\rm id}}
\def\dbslash{/\!\! /}
\def\codim{\text{\rm codim\,}}
\def\:{\colon}
\def\holim{\text{holim\,}}
\def\hocolim{\text{hocolim\,}}
\def\cal{\mathcal}
\def\Bbb{\mathbb}
\def\bold{\mathbf}
\def\simtwohead{\,\, \hbox{\raise1pt\hbox{$^\sim$} \kern-13pt $\twoheadrightarrow \, $}}
\def\codim{\text{\rm codim\,}}
\def\stableto{\mapstochar \!\!\to}

\section{Introduction} This paper is extends the results of \cite{Klein-Richter}.
Recall that the {\it spine} of a Poincar\'e duality space $X$ of dimension $d\ge 3$
is a finite CW complex $K$ of dimension $\le d-1$
 such that $X$ is homotopy equivalent to  $K \cup_{\beta} D^d$,
in which $\beta\:S^{d-1} \to L$ is the attaching map for the
top cell of $X$ (\cite[2.4]{Wall3}, \cite{Wall}).  The pair $(K,\beta)$ is 
unique up to homotopy type.

We shall consider here
the problem of deciding when a finite complex is stably the spine of some Poincare space:

\begin{ques}
Let $K$ be a connected based finite complex. When is some $j$-fold suspension $\Sigma^j K$
the spine of some Poincare space $X$?
\end{ques}

A question like this appears 
in the problems section of the proceedings of the 1982 Northwestern homotopy theory conference
\cite{Hodgson}. It is not hard to construct spaces $K$ answering the question in the negative. Nevertheless, we will see that the question does have a solution for 
certain interesting class of spaces. Roughly, these spaces will be 
``very self Spanier-Whitehead dual'' in a sense that we will explain.

Our first main theorem says that if $K$ is already a spine of an $X$ 
and the latter has trivial Spivak fibration, then $\Sigma^j K$ for infinitely many $j$
are also spines. More precisely, let us call a Poincar\'e space $X = K \cup_\beta D^d$
{\it stably parallelizable} if its Spivak normal fibration is trivializable. This is equivalent to the
statement that the attaching map $\beta\: S^{d-1} \to K$ is stably null homotopic, and also 
to the statement that the top cell of $X$ stably falls off.

Let us recall 
that the {\it Adams number} $\phi(\ell)$ is the number of of positive integers $n\le \ell$
such $n$ is congruent to $0,1,2,4$ modulo $8$.

\begin{bigthm} \label{first} 
Assume $\ell \ge 2$. Suppose $K$ is the spine of a Poincar\'e space $X^d$ whose 
top cell falls off after $\ell$ suspensions. 

Then there is a Poincar\'e space $Y^{d+2j}$ with spine
$\Sigma^jK$, where $j = 2^{\phi(\ell-1)}$. Furthermore, the top cell of $Y$ splits off
after $\ell$ suspensions.
\end{bigthm}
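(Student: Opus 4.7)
The plan is to construct $Y$ by producing an explicit attaching map $\gamma \colon S^{d+2j-1} \to \Sigma^j K$ from $\beta$ via James periodicity, and then to verify Poincar\'e duality for $Y = \Sigma^j K \cup_\gamma D^{d+2j}$.

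I would first reinterpret the Poincar\'e duality structure on $X$ as a $\mathbb{Z}/2$-equivariant pairing
\[
\mu \colon K \wedge K \longrightarrow S^d,
\]
where $\mathbb{Z}/2$ acts by the swap on the source and trivially on the target. This is the cup product with the fundamental class, and stable parallelizability is what allows the target to be an untwisted sphere (rather than a Thom spectrum of the Spivak fibration). The hypothesis $\Sigma^\ell \beta \simeq *$ then refines $\mu$: it provides equivariant null-homotopy data parametrized by the projective space $\mathbb{R}P^{\ell-1}$, since $\mathbb{R}P^{\ell-1}$ governs the $\mathbb{Z}/2$-equivariant stabilization obstructions through dimension $\ell$.

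Next, I would invoke James periodicity. For $j = 2^{\phi(\ell-1)}$, the $j$-fold multiple of the tautological line bundle on $\mathbb{R}P^{\ell-1}$ is fiber-homotopy trivial. Twisting the equivariant data of the previous step by this trivialization produces a new $\mathbb{Z}/2$-equivariant pairing $\Sigma^j K \wedge \Sigma^j K \to S^{d+2j}$---the $2j$ appears because the pairing is bilinear, so each factor gets suspended $j$ times---together with an associated attaching map $\gamma \colon S^{d+2j-1} \to \Sigma^j K$. The resulting space $Y = \Sigma^j K \cup_\gamma D^{d+2j}$ carries a Poincar\'e duality structure by construction, and the null-homotopy data transports along the twist to force $\Sigma^\ell \gamma \simeq *$, giving the final clause.

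The main obstacle will be executing the second step rigorously: one needs James periodicity not merely as a statement about stable homotopy groups, but as an equivalence between the relevant $\mathbb{Z}/2$-equivariant function spaces of pairings, and one must also verify that the twisted pairing produces an honestly nondegenerate cap product rather than merely a cohomologically nondegenerate one. I expect both issues to be handled by the ``quadratic self-duality'' framework that the introduction indicates will be developed later in the paper, applied specifically when $K$ is already the spine of a Poincar\'e duality space.
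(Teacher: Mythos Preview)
Your strategic skeleton is right---James periodicity applied to $\Bbb Z_2$-equivariant duality data over $\Bbb RP^{\ell-1}$ is exactly the engine---but two concrete constructions are missing, and the paper's proof runs in the dual direction to yours.

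First, the paper does not package the input as a pairing $\mu\:K\smsh K \to S^d$. Instead it extracts from $(\beta,\eta)$ a stable map $f\:S^d \stableto D_2^\ell K$ landing in the \emph{truncated quadratic construction} $D_2^\ell K = S(\ell\alpha)_+\smsh_{\Bbb Z_2} K^{\smsh 2}$, whose composite with the transfer is the $S$-duality. The tool that produces $f$ from the null homotopy is the \emph{stable Hopf invariant} $h_\ell\:\Omega^\ell\Sigma^\ell K \to \Omega^\ell\Sigma^\ell D_2^\ell K$, applied as a map of pairs $(\Omega^\ell\Sigma^\ell K, K)\to(\Omega^\ell\Sigma^\ell D_2^\ell K, CK)$. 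This is what makes your sentence ``$\Sigma^\ell\beta\simeq *$ provides equivariant null-homotopy data parametrized by $\Bbb RP^{\ell-1}$'' precise; without naming $h_\ell$ you have no mechanism for it.

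Second, and more seriously, your phrase ``together with an associated attaching map $\gamma$'' hides the entire nontrivial step. The paper builds an explicit \emph{equivariant Whitehead product} $\omega_\ell\: S(\ell\alpha)_+\smsh_{\Bbb Z_2}(S^\alpha\smsh K^{\smsh 2}) \to \Sigma K$; after James periodicity identifies the source with $\Sigma^{2j-1}D_2^\ell K$, one sets $\gamma = \omega_\ell \circ \Sigma^{2j-1}f$. The crucial fact is that $E_\ell\circ\omega_\ell$ has a preferred null homotopy coming from the second filtration of the May--Snaith configuration model for $\Omega^\ell\Sigma^\ell(\Sigma K)$. This single null homotopy does double duty: it forces $\Sigma^\ell\gamma\simeq *$, and it identifies the Hopf invariant of $(\gamma,\text{its null homotopy})$ with $\Sigma^{2j}f$ via the transfer-compatible equivalence $D_2^\ell(\Sigma^j K)\simeq \Sigma^{2j}D_2^\ell K$, so the Poincar\'e criterion (Proposition~\ref{criterion}) transports from $X$ to $Y$ automatically. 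Your proposal has no candidate for $\omega_\ell$, so the passage from twisted pairing back to an unstable attaching map is a genuine gap.
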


\begin{rems_un} (1). The proof will show that
 $Y$ is functorial in $\beta$ and the choice of null homotopy of $\Sigma^\ell \beta$.
 \smallskip
 
 {\flushleft (2).} Although they have different dimensions,  $X$ and $Y$ 
 have the same Euler characteristics and signatures. If
 we ignore gradings, then the intersection pairings of $X$ and $Y$
 are isomorphic. 
\smallskip

{\flushleft (3).} In a previous paper \cite{Klein-Richter} we considered the
case $\ell = 1$ and showed that the analogous statements are true provided one takes $j = 2$.
\smallskip

{\flushleft (4).} One can apply the theorem iteratively to produce a sequence of Poincar\'e
spaces $X_s$ with spine $\Sigma^{sj} X$. 
\end{rems_un}

If $K$ fails to be a spine, then one can still give a partial solution to the basic question
in terms of Spanier-Whitehead duality.
Let $$D_2 K \,\, := \,\, (E\Bbb Z_2)_+ \smsh_{\Bbb Z_2} K^{\smsh 2} $$ be the {\it quadratic construction}, i.e., the reduced homotopy orbits of $\Bbb Z_2$ acting by transposition on the smash product $K \smsh K$. Note that $D_2$ is a functor on the
category whose objects are based spaces and whose morphisms are stable maps.
In what follows we use the notation
$A\stableto  B$ to denote a stable map of spaces from $A$ to $B$.

A stable map
$$
S^d \stableto D_2 K
$$
is said to be a {\it quadratic duality} if the composition
$$
S^d \stableto D_2 K \overset{\tr} \stableto K \smsh K 
$$
is an $S$-duality map, where the second displayed map is  the
 transfer associated with the action of $\Bbb Z_2$ on $K\smsh K$. 
(Recall that an $S$-duality  $u\: S^n \stableto A\smsh B$ is a stable map
such that the operation 
$f\mapsto (f\smsh {\text{id}}_B)\circ u$ gives an isomorphism $\{A,E\} \cong \{S^n, E\smsh B\}$, where
$\{A,E\}$ is the abelian group of homotopy classes of stable maps from $A$ to $E$.) Our notion of quadratic duality is
a space level version of a construction given by Ranicki in the category of chain complexes \cite{Ranicki}.

\begin{bigthm} \label{quadratic} The following statements are equivalent.
\begin{enumerate} 
\item  There exists a quadratic duality map $S^d \stableto D_2K$.
\item There exists a stably parallelizable 
Poincar\'e duality space $X^{d+2j}$ with spine $\Sigma^j K$ for some
non-negative integer $j$.
\end{enumerate}
\end{bigthm}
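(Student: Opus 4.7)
The plan is to prove the two implications separately, using in both directions the machinery of \emph{quadratic Poincar\'e structures}: when the Spivak fibration of a Poincar\'e space $X^n$ is trivialized, the symmetric duality $S^n \stableto X\wedge X$ admits a canonical refinement to a stable map $S^n\stableto D_2(X)$ whose transfer composition recovers it. The passage between $D_2 K$ and $D_2\Sigma^j K$ for $j$ a suitable power of $2$ is then effected by James periodicity.

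For $(2)\Rightarrow(1)$, suppose $X^{d+2j}=\Sigma^j K\cup_\beta D^{d+2j}$ is a stably parallelizable Poincar\'e space. Stable parallelizability is equivalent to the top cell splitting off stably, so that $X$ stably decomposes as $\Sigma^j K\vee S^{d+2j}$, and the trivialization of the Spivak bundle yields a quadratic Poincar\'e structure $\mu\colon S^{d+2j}\stableto D_2(X)$. Combining the stable splitting of $X$ with the standard stable decomposition $D_2(A\vee B)\simeq D_2 A\vee D_2 B\vee(A\wedge B)$, one projects $\mu$ onto the $D_2(\Sigma^j K)$ wedge summand. James periodicity then identifies, for $j$ a suitable power of $2$, the stable homotopy group $\{S^{d+2j},D_2(\Sigma^j K)\}$ with $\{S^d, D_2 K\}$, producing the desired $\rho\colon S^d\stableto D_2 K$. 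The same identification, applied to the $\Sigma^{2j}(K\wedge K)$ summand of $X\wedge X$, shows that $\tr\circ\rho\colon S^d\stableto K\wedge K$ is the $S$-duality for $K$ induced from that of $\Sigma^j K$, exhibiting $\rho$ as a quadratic duality.

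For $(1)\Rightarrow(2)$, given a quadratic duality $\rho\colon S^d\stableto D_2 K$, apply the same periodicity identification in reverse to produce, for $j$ a suitable power of $2$, a stable map $\tilde\rho\colon S^{d+2j}\stableto D_2(\Sigma^j K)$ whose transfer $\tr\circ\tilde\rho$ is an $S$-duality $S^{d+2j}\stableto\Sigma^j K\wedge\Sigma^j K$. For $j$ sufficiently large, the high connectivity of $\Sigma^j K$ permits one to desuspend this stable duality to an unstable geometric duality, from which one extracts an attaching map $\beta\colon S^{d+2j-1}\to\Sigma^j K$ whose cone $X:=\Sigma^j K\cup_\beta D^{d+2j}$ is a Poincar\'e complex---the extraction of $\beta$ from the duality parallels the procedure used to recover the attaching map of a Poincar\'e complex from its diagonal, and the fact that $\rho$ is \emph{quadratic} rather than merely symmetric is precisely what guarantees that the recovered diagonal has the $\Bbb Z_2$-equivariant structure needed for honest Poincar\'e duality on $X$. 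Finally, $\beta$ is stably null-homotopic by construction, so $X$ is stably parallelizable.

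The main technical obstacle is the James periodicity identification inside the quadratic construction. Concretely, $D_2(\Sigma^j K)$ may be realized as the Thom space over $B\Bbb Z_2=\Bbb{RP}^\infty$ of the $j$-fold tautological line bundle, with $K\wedge K$ smashed in fibrewise; the desired comparison with $\Sigma^{2j}D_2 K$ is then available only after suspending enough times to invoke the classical James stable splittings of stunted projective spaces, and this is where the powers of $2$ dictated by the Adams number $\phi$ enter. A secondary difficulty in $(1)\Rightarrow(2)$ is the geometric desuspension: one must leverage the connectivity of $\Sigma^j K$ to realize $\beta$ unstably and verify that the quadratic refinement of the duality translates into honest Poincar\'e duality for $X$.
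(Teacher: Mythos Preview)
Your overall strategy---pass from the Poincar\'e space to a class in $D_2$ of its spine via a quadratic refinement of the diagonal, and use James periodicity to move between $D_2(\Sigma^j K)$ and $D_2 K$---is the right shape, and your $(2)\Rightarrow(1)$ sketch is close to the paper's Hopf-invariant construction $\delta(\beta,\eta)$ of \S2. But there are two genuine gaps.

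First, the identification $\{S^{d+2j},D_2(\Sigma^j K)\}\cong\{S^d,D_2 K\}$ that you invoke is \emph{false} for the full quadratic construction. As you note, $D_2(\Sigma^j K)$ is the Thom space of $j\xi$ over $\Bbb RP^\infty$ (twisted by $K^{\smsh 2}$), and $j\xi$ is never trivial over all of $\Bbb RP^\infty$; already for $K=S^0$, $j=1$ one has $D_2(S^1)\simeq\Sigma\Bbb RP^\infty$ while $\Sigma^2 D_2(S^0)\simeq\Sigma^2\Bbb RP^\infty_+$, and these are not stably equivalent. What James periodicity actually provides is the \emph{truncated} equivalence $D_2^\ell(\Sigma^j K)\simeq\Sigma^{2j}D_2^\ell(K)$ when $j=2^{\phi(\ell-1)}$, coming from the triviality of $j\xi$ over $\Bbb RP^{\ell-1}$. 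The paper therefore routes the entire argument through the truncated construction $D_2^\ell$ (Theorem~C), using that any map from a sphere into $D_2 K$ factors through some $D_2^\ell K$ by compactness; Theorem~B is then deduced from Theorem~C. Your ``suspend enough times'' remedy in the last paragraph does not repair this: no amount of suspension makes $j\xi$ trivial over the infinite projective space.

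Second, your $(1)\Rightarrow(2)$ direction is missing its main content. Given the quadratic duality, you say one ``extracts an attaching map $\beta$'' from the resulting $S$-duality by a procedure ``parallel[ing] the procedure used to recover the attaching map of a Poincar\'e complex from its diagonal''---but that procedure is precisely what has to be built, and it is not formal. The paper constructs it explicitly in \S4 as a generalized Whitehead product: an equivariant map $\tilde\omega\colon S^\alpha\smsh K^{\smsh2}\to\Sigma K$ which, after passing to $S(\ell\alpha)$-orbits and invoking the truncated periodicity, becomes an \emph{unstable} map
\[
\omega_\ell\colon \Sigma^{2j-1}D_2^\ell K \to \Sigma^j K.
\]
The attaching map is then the composite $\gamma\colon S^{d+2j-1}\to\Sigma^{2j-1}D_2^\ell K\xrightarrow{\omega_\ell}\Sigma^j K$. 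The key Lemma~4.1---that $E_\ell\circ\omega_\ell$ is canonically null with controlled connectivity, proved via the May--Snaith filtration of $\Omega^\ell\Sigma^\ell$---is what simultaneously guarantees that $\gamma$ is stably null (giving stable parallelizability) and that the Hopf invariant of $\gamma$ recovers the original quadratic duality (giving Poincar\'e duality via Proposition~2.1). Your sketch contains no analogue of this construction.
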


 Theorem \ref{quadratic} can be refined to give 
 explicit information about the integers $j$ for which the statement (2) holds. 
 We say that that $K$ is 
{\it $j$-periodic} for some positive integer $j$ if there
is an integer $c$ and a sequence of Poincare complexes $X_1,X_2, \dots$ 
such that the spine of $X_i$ is $\Sigma^{c+ij}K$. If no such pair exists, call $K$
{\it aperiodic}.

\begin{exs} (1). The sphere $S^k$ is aperiodic:
since there are only finitely many homotopy classes having Hopf invariant one, 
there are only finitely many complexes 
of the form $S^p \cup_\beta D^n$ satisfying Poincar\'e duality.

{\flushleft (2).} The wedge $S^k \vee S^k$ is $1$-periodic, since $
S^{j} \vee S^{j}$ is the spine of $S^j \times S^j$ for every $j$. 
\smallskip

{\flushleft (3).} The spine of the Stiefel manifold $V_2(\Bbb R^{2j+3})$
of two-frames in $\Bbb R^{2j+3}$ is $\Sigma^{2j}\Bbb RP^2$. Consequently, $\Bbb RP^2$ is
$2$-periodic.  It is known not to be $1$-periodic, even though $\Sigma \Bbb RP^2$ is the
spine of a five manifold (cf.\ \cite{Klein-Richter}).
\smallskip
\end{exs}

Let 
$$
D_2^\ell K  = S^{\ell-1}_+ \smsh_{\Bbb Z_2} K^{\smsh 2}
$$
where $\Bbb Z_2$ acts antipodally on $S^{\ell-1}$ (observe that $D_2^\infty K = D_2 K$ contains
$D_2^\ell K$).

An {\it $\ell$-refined quadratic duality} for $K$ is stable map 
$$
S^d \stableto D_2^\ell K
$$
such that the composite
$$
S^d \stableto D_2^\ell K \overset{\subset}\to D_2 K 
$$
is a quadratic duality. Since the  the inclusion
$D_2^\ell(K) \to D_2(K)$ is $\ell$-connected, it follows that any quadratic
duality factors through an $\ell$-refined one when $\ell$ is sufficiently large. In fact, if $K$ is $r$-connected, then 
$\ell$ can be taken to be $d-2r-1$.

\begin{bigthm} \label{quadratic-refinement} Assume $\ell \ge 2$. Then the following statements are equivalent
\begin{enumerate} 
\item There is an $\ell$-refined quadratic duality $S^d \stableto D_2^\ell K$.
\item There is a Poincar\'e space $Y^{d+2j}$ with spine $\Sigma^j K$ and whose
top cell falls off after $\ell$-suspensions, where
$j$ is some non-negative integer multiple of $2^{\phi(\ell-1)}$. 
\end{enumerate}
\end{bigthm}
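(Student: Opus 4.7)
I would handle the two implications separately. The direction $(2) \Rightarrow (1)$ extracts an $\ell$-refined quadratic duality from the Poincar\'e structure of $Y$, while $(1) \Rightarrow (2)$ uses the $\ell$-refined duality to build a base-case Poincar\'e space and then invokes Theorem~\ref{first} iteratively to generate the full periodic family.

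For $(2) \Rightarrow (1)$: let $Y^{d+2j}$ be Poincar\'e with spine $\Sigma^j K$ and top cell falling off after $\ell$ suspensions. The composition of the fundamental class map $S^{d+2j} \to Y$, the reduced diagonal $Y \to Y \smsh Y$ lifted equivariantly (using the $\ell$-step trivialization of the Spivak normal fibration) into the $\Bbb Z_2$-homotopy orbits of $Y\smsh Y$, and the collapse to the spine, yields a stable map $S^{d+2j} \stableto D_2^\ell(\Sigma^j K)$ whose composition with the transfer recovers the $S$-duality for $Y$. I would then invoke a James-type stable splitting of the form $\Sigma^\infty D_2^\ell(\Sigma^j K) \simeq \Sigma^{2j}\Sigma^\infty D_2^\ell K \vee (\text{lower order summands})$, project to the indicated factor, and desuspend $2j$ times to obtain the required $\ell$-refined quadratic duality $S^d \stableto D_2^\ell K$.

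For $(1) \Rightarrow (2)$: given $u\colon S^d \stableto D_2^\ell K$, composition with the transfer yields an $S$-duality $S^d \stableto K\smsh K$, and the $\ell$-refinement endows the associated data with a $\Bbb Z_2$-structure factoring through the $(\ell-1)$-skeleton of $E\Bbb Z_2$. Following the approach of \cite{Klein-Richter}, I would use this data to construct an attaching map $\beta\colon S^{d+2j-1}\to \Sigma^jK$ by extending the duality through the Thom space of the canonical line bundle over the stunted real projective space $P^{j+\ell-1}_{j}$; James periodicity asserts that this Thom space is stably $2^{\phi(\ell-1)}$-periodic in $j$, which is precisely what forces the base-case $j$ to be a multiple of $2^{\phi(\ell-1)}$. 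One then verifies that $\Sigma^jK\cup_\beta D^{d+2j}$ is Poincar\'e and that $\beta$ is null homotopic after $\ell$ suspensions, since it factors through an $\ell$-dimensional Thom space. Iterating Theorem~\ref{first} starting from this base case produces spines $\Sigma^{j+s\cdot 2^{\phi(\ell-1)}}K$ for all $s\ge 0$, which are again multiples of $2^{\phi(\ell-1)}$.

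\textbf{Main obstacle.} The chief difficulty lies in the base case of $(1) \Rightarrow (2)$: verifying that the $\ell$-refined duality yields a bona fide Poincar\'e attaching map at the stated dimension, and that James's periodicity enters with exactly the exponent $2^{\phi(\ell-1)}$ and not some smaller one. This demands a careful identification of the relevant Thom space with $T(\gamma\downarrow P^{j+\ell-1}_{j})$ and a direct application of James's theorem in this geometric setting. The splitting used in $(2) \Rightarrow (1)$ also requires care to ensure the $\ell$-refinement is preserved, but this is essentially formal once the refinement is identified with the skeletal filtration of $E\Bbb Z_2$.
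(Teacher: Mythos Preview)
Your overall architecture is right, but both directions are missing the specific mechanisms the paper actually uses, and in $(2)\Rightarrow(1)$ you invoke a splitting that does not exist.

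In $(2)\Rightarrow(1)$ you posit a stable decomposition
\[
\Sigma^\infty D_2^\ell(\Sigma^j K)\;\simeq\;\Sigma^{2j}\Sigma^\infty D_2^\ell K \;\vee\;(\text{lower order summands})
\]
and project onto the first factor. There is no such splitting in general; what the paper uses instead is the James periodicity \emph{equivalence} $D_2^\ell(\Sigma^j K)\simeq \Sigma^{2j}D_2^\ell K$ (Corollary~\ref{James}), which holds precisely because $j$ is a multiple of $2^{\phi(\ell-1)}$ and is compatible with transfers. This is not a projection from a wedge but an honest identification coming from the triviality of $j\xi_\ell$. Your argument never uses the hypothesis on $j$ at this step, which is a sign that something is off. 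The paper also produces the map $S^{d+2j}\stableto D_2^\ell(\Sigma^jK)$ via the Hopf invariant $h_\ell$ applied to the null-homotopy data $(\beta,\eta)$, as in \S2, rather than via an equivariant lift of the diagonal; the two are related, but only the Hopf-invariant formulation feeds cleanly into the Poincar\'e criterion (Proposition~\ref{criterion}).

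In $(1)\Rightarrow(2)$ your Thom-space description is too vague to pin down the attaching map. The paper's actual device is the generalized Whitehead product $\omega_\ell\colon \Sigma^{2c-1}D_2^\ell K \to \Sigma^c K$ of \S4 (Corollary~\ref{useful}): one desuspends the given $f$ to an unstable map $S^{d+2c-1}\to\Sigma^{2c-1}D_2^\ell K$ with $c$ a multiple of $2^{\phi(\ell-1)}$, then composes with $\omega_\ell$ to obtain $\gamma$. The null homotopy of $\Sigma^\ell\gamma$ comes from Lemma~\ref{EP=0} (the fact that $E_\ell\circ\omega_\ell$ is canonically null via the configuration-space filtration), not from any dimension count on a Thom space. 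Poincar\'e duality for the mapping cone is then verified by reversing the Hopf-invariant construction and applying Proposition~\ref{criterion}. Without $\omega_\ell$ and Proposition~\ref{criterion} you have no way to produce $\gamma$ or to certify that its cone is Poincar\'e.
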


If we combine Theorems \ref{first} and \ref{quadratic-refinement} we obtain

\begin{bigcor} Assume $\ell \ge 2$. Then
the complexes $K$ satisfying Theorem 
\ref{quadratic-refinement} are $2^{\phi(\ell-1)}$-periodic.

If $\ell = 1$, then such $K$ are $2$-periodic.
\end{bigcor}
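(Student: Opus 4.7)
\smallskip

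\noindent\textbf{Proof plan.} The strategy is to combine the two main theorems: Theorem \ref{quadratic-refinement} produces a single ``starter'' Poincar\'e structure, and Theorem \ref{first} is then iterated to produce an entire periodic family.

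First I would invoke Theorem \ref{quadratic-refinement}. By hypothesis $K$ admits an $\ell$-refined quadratic duality, so statement (2) of that theorem gives a Poincar\'e space $Y_0^{\,d+2j_0}$ whose spine is $\Sigma^{j_0} K$, whose top cell splits off after $\ell$ suspensions, and where $j_0$ is a non-negative integer multiple of $2^{\phi(\ell-1)}$. This $Y_0$ serves as the base case of an inductive construction, and the integer $c := j_0$ is the shift constant called for in the definition of periodicity.

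Next I would feed $Y_0$ into Theorem \ref{first}. Since $Y_0$ has spine $\Sigma^{j_0} K$ and its top cell falls off after $\ell$ suspensions, Theorem \ref{first} (applied with $K$ replaced by $\Sigma^{j_0} K$) outputs a Poincar\'e space $Y_1$ whose spine is $\Sigma^{\,2^{\phi(\ell-1)}}(\Sigma^{j_0} K) = \Sigma^{j_0 + 2^{\phi(\ell-1)}} K$, and, crucially, whose top cell also splits off after $\ell$ suspensions. This last clause is exactly what makes the inductive step self-sustaining: the hypothesis of Theorem \ref{first} is reproduced by its conclusion. Iterating, one obtains a sequence $Y_1, Y_2, \dots$ of Poincar\'e spaces with spines
\[
\Sigma^{\,j_0 + s\cdot 2^{\phi(\ell-1)}} K \qquad (s=1,2,\dots),
\]
which is precisely the condition for $K$ to be $2^{\phi(\ell-1)}$-periodic with constant $c=j_0$.

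For the case $\ell = 1$, Theorem \ref{quadratic-refinement} is not available, so I would start instead from Theorem \ref{quadratic}: a quadratic duality (the $\ell = 1$ case being unrestricted in the sense that any quadratic duality trivially refines itself) produces a stably parallelizable Poincar\'e space $X^{d+2j_0}$ with spine $\Sigma^{j_0} K$. One then iterates Remark (3) after Theorem \ref{first}, which asserts the $\ell = 1$ analogue of Theorem \ref{first} with $j = 2$; stable parallelizability (equivalently, the top cell falls off after one suspension) is preserved by that construction, so the same inductive scheme produces Poincar\'e spaces with spines $\Sigma^{j_0 + 2s} K$, establishing $2$-periodicity.

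The only real point requiring care is the persistence of the ``top cell falls off after $\ell$ suspensions'' property under iteration. This is built into the statement of Theorem \ref{first} (and its $\ell=1$ remark), so the induction runs cleanly; no further homotopy-theoretic input is needed beyond the two main theorems already proved.
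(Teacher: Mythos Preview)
Your argument for $\ell \ge 2$ is correct and is exactly what the paper intends: the corollary is introduced with the words ``If we combine Theorems \ref{first} and \ref{quadratic-refinement} we obtain'', and that combination is the bootstrap you wrote out.

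The $\ell = 1$ paragraph, however, contains two genuine errors. First, a $1$-refined quadratic duality is \emph{not} the same as a quadratic duality: it must factor through $D_2^1 K \simeq K\smsh K$, a proper subspace of $D_2 K$, so your parenthetical ``any quadratic duality trivially refines itself'' is false. Second, ``stably parallelizable'' is \emph{not} equivalent to ``the top cell falls off after one suspension''; it only means the top cell falls off after \emph{some} finite number of suspensions. Consequently Theorem \ref{quadratic} does not hand you a Poincar\'e space to which the $\ell=1$ analogue of Theorem \ref{first} (Remark (3)) applies, and your induction does not start. The correct route is to note that the proof of Theorem \ref{quadratic-refinement} in \S6 runs verbatim for $\ell = 1$ once one invokes the remark after Corollary \ref{James} (that its conclusions hold with $j=2$ when $\ell=1$); this yields a Poincar\'e space whose top cell genuinely splits after a single suspension, and then Remark (3) iterates as you intended.
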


\begin{bigcor} \label{2-period} Assume $K$ is $r$-connected, $d = 2r+2$ and there is 
a quadratic duality $S^d \stableto D_2(K)$. Then $K$ is $2$-periodic. 
\end{bigcor}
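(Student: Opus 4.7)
The plan is to deduce Corollary~\ref{2-period} from the immediately preceding corollary by verifying its hypotheses with $\ell = 2$.  The key point is that the dimensional equality $d = 2r+2$ is exactly what is needed to lift the given quadratic duality to an $\ell$-refined one for small $\ell$, and then to tune $\ell$ so that the Adams function gives the sharp periodicity.

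First, since $K$ is $r$-connected and $d = 2r+2$, we compute $d - 2r - 1 = 1$.  By the connectivity remark appearing just before Theorem~\ref{quadratic-refinement}, this implies that the given quadratic duality $S^d \stableto D_2(K)$ factors through $D_2^1(K) \hookrightarrow D_2(K)$; that is, $K$ admits a $1$-refined quadratic duality.  Composing this lift with the further inclusion $D_2^1(K) \hookrightarrow D_2^2(K)$ gives a stable map $S^d \stableto D_2^2(K)$ whose composite back down to $D_2(K)$ is the original quadratic duality.  Hence $K$ also admits a $2$-refined quadratic duality, so hypothesis~(1) of Theorem~\ref{quadratic-refinement} is satisfied with $\ell = 2$.

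Now we invoke the preceding corollary with $\ell = 2$.  The positive integers $n \le 1$ satisfying $n \equiv 0,1,2,4 \pmod 8$ are just $\{1\}$, so $\phi(1) = 1$ and therefore $2^{\phi(\ell-1)} = 2^{\phi(1)} = 2$.  The preceding corollary then yields that $K$ is $2$-periodic, as claimed.

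No substantive obstacle arises: the underlying periodicity is already supplied by Theorems~\ref{first} and~\ref{quadratic-refinement} via the preceding corollary, and the connectivity lifting is precisely the content of the remark after the definition of $\ell$-refined quadratic duality.  The only thing the proof has to do is observe that the extremal case $d = 2r+2$ forces $\ell = 1$ to suffice, whence a fortiori $\ell = 2$ works, which is exactly the smallest permissible value in Theorem~\ref{quadratic-refinement} and produces the desired factor of two.
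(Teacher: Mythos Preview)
Your argument is correct and follows the same route the paper implicitly takes: use the connectivity remark before Theorem~\ref{quadratic-refinement} to lift the quadratic duality to an $\ell$-refined one, then invoke the preceding corollary. The only cosmetic difference is that the paper's Corollary~D already contains a separate clause for $\ell=1$ asserting $2$-periodicity directly (relying on the $\ell=1$ case handled in \cite{Klein-Richter} and the remark after Corollary~\ref{James}), so one need not pass up to $\ell=2$; your detour through $\ell=2$ and the computation $2^{\phi(1)}=2$ is equally valid and keeps the argument entirely within the $\ell\ge 2$ framework of Theorem~\ref{quadratic-refinement}.
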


\subsection*{The relationship with surgery theory}
Let $f\: X^d \to S^d$ be a Poincar\'e normal map, 
that is, the pullback of the stable normal bundle of $S^d$ is identified with
the Spivak normal bundle of $X$; this is the same thing as equipping $X$ with a stable 
fiber homotopy trivialization $b$ of
its Spivak normal fibration. 
Poincar\'e normal maps of this kind arise from stably parallelizable Poincar\'e spaces
$X = K \cup D^n$ by collapsing $K \subset X$ to a point. 
Up to homotopy, every normal map $f\: X \to S^d$ is represented
in this way. 

One is typically interested in the
obstruction to deciding when $(f,b)$ is normally cobordant to a homotopy equivalence.
If $d$ is odd, then there is no such obstruction (this uses \cite{Klein_haef}). 

If $d = 2s$ is even, 
the surgery obstruction 
$\sigma(f,b)$ is defined and lives in the $L$-group $L_d = \Bbb Z/(1+(-1)^{s-1})$.
Poincare embedding theorems for spheres \cite{Klein_compress} can be used to show
that the surgery obstruction is the complete obstruction when $d \ge 6$.
A version of Wall realization also shows that any element of $L_d$ comes from
a Poincar\'e normal map. Consequently, the bordism group of normal maps
$X^d \to S^d$ is in bijection with $L_d$ when $d = 2s \ge 6$
(this statement is implicit in \cite{HV}; I intend to publish a homotopy theoretic
proof of it in the near future).

Furthermore, one can do surgery to represent any element of
the normal bordism group by a normal map 
$f\: X^d \to S^d$ which is $s$-connected. The spine $K$ of $X$ is then 
$(s-1)$-connected, and by Poincar\'e duality, it has the homotopy type of a CW complex of dimension $\le s$. Consequently, $K$ has the homotopy type of a finite wedge of $s$-spheres, and so,
using Corollary \ref{2-period}, one sees that $K$ is $2$-periodic. We therefore obtain a sequence of
stably parallelizable Poincar\'e spaces $\{X^{d+4j}\}_{j\ge 0}$ such that the
spine of $X^{d+4j}$ is $\Sigma^{2j} K$. 

Consider the associated Poincar\'e normal maps $(f_j,b_j)\: X^{d+4j} \to S^{d+2j}$. These have surgery obstructions $\sigma(f_j,b_j) \in L_{d+4j} = L_d$. It can be seen from the constructions appearing in our proof that all of these surgery obstructions coincide. It follows that periodicity in the sense of this paper corresponds to the four-fold periodicity in surgery theory. However, our interpretation
of the periodicity is different from the usual one: the periodicity operator for $L$-theory
is usually described by taking cartesian product with $\Bbb C P^2$.

\begin{out} Section 2 gives a criterion for recognizing 
Poincar\'e duality on those spaces whose top cell falls off. Section 3  discusses James periodicity.
Section 4 describes a $\Bbb Z_2$-equivariant version of the Whitehead product map. Section 5 contains the proof of Theorem \ref{first}. Section 6 proves Theorems \ref{quadratic} and \ref{quadratic-refinement}. Section 7 is an appendix proving a composition formula for the stable Hopf invariant.

We will be working in the category of based spaces having the homotopy type of a based CW complex.
A stable map from $X$ to $Y$ is a map $\Sigma^\infty X \to \Sigma^\infty Y$ of associated suspension
spectra. We will use the notation $X\stableto Y$ to denote such maps.

We use the notion of Poincar\'e complex defined by Wall \cite{Wall3}:  $X$ is said to be a Poincar\'e duality space of dimension $d$ if there is a pair $([X],\cal L)$ consisting of a rank one local coefficient system on $X$ and a class $[X] \in H_d(X;\cal L)$ such that 
the cap product homomorphism $\cap [X]\: H^*(X;M) \to H_{d-*}(X;\cal L\otimes M)$ is
an isomorphism, where $M$ ranges over all systems of local coefficients.
\end{out}
 
\begin{acks} We are indebted to Andrew Ranicki for $L$-theory related discussions, to  
Ralph Cohen clarifying James periodicity and to  
Fred Cohen  for discussions surrounding  the equivariant Whitehead product. 
 
\end{acks}

\section{A criterion for Poincar\'e duality}

Let $X$ be a space of the form  $K \cup_\beta D^d$, for some attaching map 
$\beta\: S^{d-1} \to K$. We will assume that $K$ is a connected, based CW complex of
dimension $< d$. 
 
Assume $\beta$ is stably null homotopic. A choice of null homotopy $\eta$
then amounts to map of pairs
$$
(D^d,S^{d-1}) \to (QK,K)
$$
where $QK = \Omega^\infty \Sigma^\infty K$. Let
$$
h\:QK \to QD_2K
$$
be the stable Hopf invariant \cite[p.\ 62]{Crabb} . As the composite $K \to QK \to QD_2K$ has a preferred null homotopy,
$h$ actually determines a map of pairs
$$
(QK,K) \to (QD_2K,CK)
$$
where $CK$ is the reduced cone on $K$. We will also denote this map of pairs by $h$. Taking the composite, we have a map 
$$
(D^d,S^{d-1}) \to (QK,K) \to (QD_2K,CK)
$$
which determines a homotopy class of map of quotients
$$
S^d \to QD_2K \, ,
$$
or equivalently a homotopy class of stable map
$$
\delta(\beta,\eta)\: S^d \stableto D_2K\, .
$$

\begin{prop} \label{criterion} The space $X$ is Poincar\'e duality space of dimension $d$
if and only if the composite
$$
\tr_K \circ \delta(\beta,\eta) \: S^d \stableto K \smsh K
$$
is an $S$-duality map.
\end{prop}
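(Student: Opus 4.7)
The plan is to reduce Poincaré duality for $X$ to an $S$-duality statement about the reduced diagonal applied to the top class, and then identify the geometrically essential component of that stable map with $\tr_K\circ\delta(\beta,\eta)$.

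First I would invoke the Spanier--Whitehead/Atiyah reformulation of Poincaré duality: $X$ is a Poincaré duality space of dimension $d$ if and only if the stable composite
$$\rho\: S^d \stableto \Sigma^\infty X \xrightarrow{\Delta} \Sigma^\infty(X\smsh X),$$
obtained by pairing a candidate fundamental class with the reduced diagonal $\Delta$, is an $S$-duality (with the orientation twist built in when $X$ is non-orientable). This is the stable homotopy incarnation of the cap-product criterion.

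Next, the null homotopy $\eta$ of $\Sigma^\infty\beta$ determines a stable splitting $\Sigma^\infty X \simeq \Sigma^\infty K \vee S^d$ in which the fundamental class is the inclusion of the $S^d$ summand. Smashing yields
$$\Sigma^\infty(X\smsh X)\simeq \Sigma^\infty(K\smsh K)\vee \Sigma^d K\vee \Sigma^d K\vee S^{2d},$$
and $\rho$ decomposes into four components. The $S^{2d}$-component vanishes for connectivity reasons, and the two $\Sigma^d K$-components are tautologically determined by the inclusion $K\hookrightarrow X$ and the top cell, contributing no additional content to the $S$-duality question. Hence $\rho$ is an $S$-duality iff the remaining component $\rho_{KK}\: S^d\stableto K\smsh K$ is an $S$-duality.

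The heart of the proof is therefore the identification of $\rho_{KK}$ with $\tr_K\circ \delta(\beta,\eta)$. Under the Snaith-style stable splitting of $\Sigma^\infty QK$, the Hopf invariant $h\:QK\to QD_2K$ is by construction the projection onto the second summand $\Sigma^\infty D_2 K$; the transfer $\tr_K$ then reads off the symmetric $K\smsh K$-piece of the diagonal on $QK$. The null homotopy $\eta$ is precisely the datum lifting the top cell $(D^d,S^{d-1})$ through $(QK,K)$, so running this pair through $h$ and then $\tr_K$ computes the $K\smsh K$-component of $\Delta\circ[X]$ on the top class. I expect this identification to be the main obstacle: it requires checking that the quadratic data carried by the Hopf invariant produces the $K\smsh K$-projection of the diagonal on the nose, rather than merely modulo lower filtration, which in turn hinges on the naturality properties of the stable splitting together with a diagram chase relating $h$ to the reduced diagonal of $QK$. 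Granted this identification, the proposition follows at once from the decomposition of $\rho$.
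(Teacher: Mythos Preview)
Your reduction of Poincar\'e duality to the $K\smsh K$-component of the diagonal is essentially the paper's argument, but there is a slip in the setup and a genuine gap at the end.

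The slip: Poincar\'e duality is \emph{not} equivalent to the reduced composite $S^d\stableto X\xrightarrow{\Delta} X\smsh X$ being an $S$-duality. For $X=S^d$ the reduced diagonal $S^d\to S^{2d}$ is null, yet $S^d$ is a Poincar\'e space; more structurally, the $d$-dual of the summand $S^d$ is $S^0$, so $X\simeq K\vee S^d$ can never be $d$-self-dual. The correct criterion, which the paper uses, is that $S^d\stableto X_+\xrightarrow{\Delta}X_+\smsh X_+$ be an $S$-duality; now $X_+\simeq S^0\vee K\vee S^d$ stably, the cells $S^0$ and $S^d$ are automatically dual to one another, and one is left with the $K\smsh K$-block. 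With this fix your reduction matches the paper's ``remove the mutually dual top and bottom cells of $X_+$'' step.

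The gap: you correctly flag the identification $\rho_{KK}\simeq\tr_K\circ\delta(\beta,\eta)$ as the main obstacle, but the Snaith-splitting heuristic you offer does not actually supply it. The paper's route uses two ingredients you do not mention. First, Crabb's identity $\Delta_{X_+}\circ\rho\simeq\tr_{X_+}\circ h(\rho)$ for a stable map $\rho$ out of a sphere. Second, the composition formula $h(g\circ f)=D_2(g)\circ h(f)+h(g)\circ f$, applied with $f=\rho\:S^d\stableto X_+$ and $g=\phi\:X_+\stableto K$ the stable retraction determined by $\eta$; since $\phi\circ\rho$ is stably null one obtains $D_2(\phi)\circ h(\rho)=-h(\phi)\circ\rho$. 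Combined with naturality of the transfer this gives
\[
\rho_{KK}=(\phi\smsh\phi)\circ\tr_{X_+}\circ h(\rho)=\tr_K\circ D_2(\phi)\circ h(\rho)=-\tr_K\circ h(\phi)\circ\rho,
\]
after which a short unwinding identifies $h(\phi)\circ\rho$ with $\delta(\beta,\eta)$. Without the composition formula it is unclear how you would pass from the Hopf invariant of $\rho$ (which governs $\Delta_{X_+}\circ\rho$) to the Hopf invariant data built from the pair $(QK,K)$ (which is what defines $\delta(\beta,\eta)$).
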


\begin{proof} 
The pair $\beta$ and $\eta$ determine a stable map $\rho\: S^d \stableto X_+$
and stable retraction $\phi\: X_+ \stableto K$. Then $X$ will satisfy Poincar\'e duality
if and only if the composite
$$
S^d \overset\rho \stableto X_+ \overset\Delta\to X_+ \smsh X_+
$$
is an $S$-duality. It is known that the latter coincides with
composite
$$
\tr_{X_+} \circ h(\rho) \: S^d \stableto D_2(X_+) \stableto X_+ \smsh X_+ 
$$
(cf.\ \cite{Ranicki}, \cite[p.\ 62]{Crabb}).
Observe that $K$ is the effect of removing the bottom and top cells of $X_+$, and these
cells are dual to each other.
Consequently, Poincar\'e duality for $X$ is equivalent to
$S$-duality for $(\phi \smsh \phi) \circ \tr_{X_+} \circ h(\rho)$.

Observe also that $\phi$ is compatible with transfers, in the sense that
$$
(\phi \smsh \phi) \circ \tr_{X_+} \,\, \simeq \,\, \tr_K \circ D_2(\phi)\,.
$$ Hence, 
Poincar\'e duality holds for $X$ if and only if 
$\tr_K \circ D_2(\phi) \circ h(\rho)$ is an $S$-duality.

If we apply the composition formula for $h$ (cf.\ Appendix) applied to the null
homotopic stable map $\phi \circ \rho$, we see that
$$
D_2(\phi) \circ h(\rho) = - h(\phi) \circ \rho \, .
$$
Consequently,  $\tr_K \circ D_2(\phi) \circ h(\rho)$ coincides with 
$\tr_K \circ h(\phi) \circ \rho$ up to sign. We will be done if we 
can show that the stable composite
$$
h(\phi) \circ \rho \: S^d \stableto X_+ \stableto D_2(K)
$$
coincides with $\delta(\beta,\eta)$. The point is that $h(\phi)$
restricted to $K_+$ is null homotopic in a preferred way, so we have a factorization
of $h(\phi) \circ \rho$ as
$$
S^d \stableto X_+ \to X_+/K_+ = D^d/S^{d-1} \stableto D_2K\, .
$$
The composite of the first two of these maps is clearly the identity .
Finally,  the last map $D^d/S^{d-1} \stableto D_2K$ coincides with $\delta(\beta,\eta)$.
\end{proof}

\section{James Periodicity}

The quotient
$$
\Bbb RP_m^n := \Bbb RP^n/\Bbb RP^{m-1}\, , 
$$
is standard notation for truncated real projective space.
The periodicity theorem of James  states
$$
\Bbb RP_{m+j}^{n+j} \,\, \simeq  \,\, \Sigma^j \Bbb RP_m^n
$$
whenever $j$ is a suitable power of two \cite{James}, \cite{Mahowald}. 
We will be primarily interested in the case $m =0$. The proof in this instance
follows from the computation of the order of the canonical line bundle 
in the reduced real $K$-theory of $\Bbb RP^n$. We will reformulate the precise
statement below.

Let $\alpha$ denote the involution of $\Bbb R^1$ given by $t\mapsto -t$; this is
sign representation of $\Bbb Z_2$, and is sometimes denoted by $\Bbb R^\alpha$.
Let $S(\ell \alpha)$ be the unit sphere of $\Bbb R^{\ell\alpha}$; this
is just $S^{\ell -1}$ with antipodal action.
Let $S^{\ell \alpha}$ be the one point compactification of $\Bbb R^{\ell\alpha}$.

The total space of the
canonical line bundle $\xi_\ell$ over $\Bbb R P^{\ell -1}$ is 
$$
S(\ell \alpha) \times_{\Bbb Z_2} \Bbb R^\alpha\, ,
$$
where bundle projection to $\Bbb R P^{\ell-1}$ is given by first factor projection. By
Adams \cite{Adams}, the order of $\xi_\ell$ is $j:= 2^{\phi(\ell - 1)}$, in the sense that
$j$ is the smallest positive integer for which $j\xi_\ell$ is stably trivial. 

If we write $\ell - 1 = 8d +r$, where $0\le r < 8$, then 
$$
j = \phi(8d+r) \ge \phi(8d) = 2^{4d}
> 8d + r = \ell - 1\, .
$$ It follows that $j\xi_\ell$ is trivial, since it is a stably trivial rank 
$j$-bundle over a CW complex of dimension $< j$. In particular, there is an isomorphism of fiberwise
one point compactified sphere bundles
$$
S(\ell \alpha) \times_{\Bbb Z_2} S^{j\alpha} \,\, \cong \,\, \Bbb P^{\ell -1} \times S^j
$$
over $\Bbb P^{\ell-1}$. This isomorphism preserves zero sections. 

Suppose $A$ is a based $\Bbb Z_2$-space. Then $S(\ell\alpha) \times_{\Bbb Z_2} A \to \Bbb P^{\ell-1}$
is a fibration with section. The isomorphism of sphere bundles above induces, by taking fiberwise smash products, an equivalence of fibrations with section
$$
S(\ell \alpha) \times_{\Bbb Z_2} (S^{j\alpha}\smsh A)  \,\, \simeq \,\, 
S(\ell\alpha) \times_{\Bbb Z_2} (\Sigma^j A)\, .
$$
Collapsing zero sections, we obtain 

\begin{lem} \label{pre-James} Assume $ \ell \ge 2$.
For $j = 2^{\phi(\ell-1)}$ and a based $\Bbb Z_2$-space $A$,
there is a weak equivalence of based spaces
$$
S(\ell\alpha)_+ \smsh_{\Bbb Z_2} (S^{j\alpha}\smsh A)  \,\, \simeq  \,\,
S(\ell\alpha)_+ \smsh_{\Bbb Z_2} (\Sigma^j A)\, .
$$
\end{lem}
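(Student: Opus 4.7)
The plan is essentially to carry out carefully what the paragraphs immediately preceding the lemma sketch: promote the bundle isomorphism $S(\ell\alpha)\times_{\Bbb Z_2} S^{j\alpha}\cong \Bbb P^{\ell-1}\times S^j$ to an equivalence of fiberwise smash products, then collapse zero sections.

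First I would recall why $j\xi_\ell$ is honestly (not just stably) trivial: by Adams, $j = 2^{\phi(\ell-1)}$ annihilates $\xi_\ell$ in $\widetilde{KO}(\Bbb P^{\ell-1})$, so $j\xi_\ell$ is stably trivial; the simple inequality $j \ge 2^{4d} > 8d+r = \ell-1$ (with $\ell-1 = 8d+r$, $0\le r<8$) shows that a stably trivial real bundle of rank $j$ over an $(\ell-1)$-dimensional complex is trivial. This yields an isomorphism of total spaces of rank-$j$ bundles $j\xi_\ell \cong \varepsilon^j$ over $\Bbb P^{\ell-1}$.

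Next I would identify the one-point-compactified sphere bundle of $j\xi_\ell$ with $S(\ell\alpha)\times_{\Bbb Z_2} S^{j\alpha}$ viewed as a fibration with section over $\Bbb P^{\ell-1}$ (the section being the point at infinity, which is the basepoint of the $\Bbb Z_2$-space $S^{j\alpha}$). The trivialization above then becomes a section-preserving equivalence
$$
S(\ell\alpha)\times_{\Bbb Z_2} S^{j\alpha} \;\simeq\; \Bbb P^{\ell-1}\times S^j.
$$
With this in hand I would, given a based $\Bbb Z_2$-space $A$, form the fiberwise smash product over $\Bbb P^{\ell-1}$ with the sectioned fibration $S(\ell\alpha)_+\times_{\Bbb Z_2} A \to \Bbb P^{\ell-1}$; because fiberwise smash commutes with the above equivalence of sectioned fibrations, I obtain a section-preserving equivalence whose two sides are precisely
$$
S(\ell\alpha)\times_{\Bbb Z_2}(S^{j\alpha}\smsh A)\quad\text{and}\quad S(\ell\alpha)\times_{\Bbb Z_2}(\Sigma^j A).
$$
Collapsing the zero (base) sections then produces the desired weak equivalence of based spaces, giving the lemma.

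The only genuinely technical step is justifying that the chosen bundle trivialization can be arranged to preserve the $\Bbb Z_2$-fixed zero section and commute with fiberwise smashing by an arbitrary based $\Bbb Z_2$-space; this is the main obstacle, but it is a standard fiberwise-basepoint manipulation, and it is the same bookkeeping that makes James periodicity into a statement about $\Sigma^j \Bbb RP^n_m$ in the first place. Everything else — the appeal to Adams, the dimension comparison, and the collapse of zero sections — is essentially bookkeeping.
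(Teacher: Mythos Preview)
Your proposal is correct and follows exactly the same route as the paper: the paper's proof \emph{is} the preceding discussion you summarize --- Adams' order computation, the rank-versus-dimension inequality to upgrade stable triviality to actual triviality, the resulting sphere-bundle isomorphism, fiberwise smashing with $A$, and collapsing zero sections. The only quibble is that the inequality $2^{4d}>8d+r$ (used both by you and by the paper) fails when $d=0$, i.e.\ for $2\le\ell\le 8$; there the conclusion $j>\ell-1$ still holds but needs a one-line direct check of $\phi(r)$ for $1\le r\le 7$.
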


\begin{rem} If we take $A = S^0$, the statement gives a homotopy equivalence
$
\Bbb RP^{\ell+j-1}_{j-1}  \simeq  \Sigma^j\Bbb RP^{\ell-1}
$.
\end{rem}

\begin{cor} \label{James} With $\ell$ and $j$ as above and 
$K$ a based space, there is a weak equivalence
$$
\Sigma^{j-1}(S(\ell\alpha)_+ \smsh_{\Bbb Z_2} (S^{j\alpha} \smsh K^{\smsh 2}) 
\,\, \simeq \,\, \Sigma^{2j-1} D_2^\ell(K)\, .
$$
After suspending once, this becomes an equivalence 
$$
D^\ell_2(\Sigma^j K) \,\, \simeq \,\, \Sigma^{2j} D^\ell_2(K) 
$$
which is compatible with transfers, in the sense
that the diagram
$$
\xymatrix{
D^\ell_2(\Sigma^j K)   \ar[r]^\simeq \ar[d]_{\tr_{\Sigma^j K}}  & \Sigma^{2j} D_2^\ell(K) 
\ar[d]^{\Sigma^{2j}\tr_K} \\
 \Sigma^j K \smsh \Sigma^j K \ar[r]_{=} & \Sigma^{2j} K \smsh K
}
$$
homotopy commutes.
\end{cor}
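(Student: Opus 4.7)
The plan is to deduce both displayed equivalences from Lemma \ref{pre-James} by making the appropriate equivariant identifications, and to read off transfer compatibility from the underlying bundle isomorphism used to prove that lemma.

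First I would apply Lemma \ref{pre-James} with $A = K^{\smsh 2}$ equipped with the $\Bbb Z_2$-transposition action. Since $S^j$ carries the trivial $\Bbb Z_2$-action, the right-hand side of that lemma simplifies to
$$S(\ell\alpha)_+ \smsh_{\Bbb Z_2} (\Sigma^j K^{\smsh 2}) \,\,=\,\, \Sigma^j D_2^\ell K,$$
so suspending the resulting equivalence $j-1$ times yields the first displayed equivalence of the corollary.

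To upgrade this, after one further suspension, to the equivalence $D_2^\ell(\Sigma^j K) \simeq \Sigma^{2j}D_2^\ell K$, I would use the standard $\Bbb Z_2$-equivariant identification $(S^j)^{\smsh 2} \cong S^j \smsh S^{j\alpha}$ coming from the orthogonal change of coordinates $(x,y)\mapsto(x+y,x-y)$, which diagonalises the swap representation on $\Bbb R^j \oplus \Bbb R^j$ as trivial $\oplus$ sign. Unshuffling the middle $K$ and $S^j$ factors then produces an equivariant homeomorphism
$$(\Sigma^j K)^{\smsh 2} \,\,\cong\,\, S^j \smsh S^{j\alpha} \smsh K^{\smsh 2}.$$
Substituting this into $D_2^\ell(\Sigma^j K) = S(\ell\alpha)_+ \smsh_{\Bbb Z_2} (\Sigma^j K)^{\smsh 2}$ and pulling the trivial $\Bbb Z_2$-factor $S^j$ outside of the orbit construction gives
$$D_2^\ell(\Sigma^j K) \,\,\cong\,\, \Sigma^j\bigl(S(\ell\alpha)_+ \smsh_{\Bbb Z_2} (S^{j\alpha} \smsh K^{\smsh 2})\bigr),$$
whose right-hand side is $\Sigma^{2j} D_2^\ell K$ by the first step. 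Thus the ``suspending once'' statement is really a tautological rewriting of the first equivalence on the left-hand side.

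For the transfer compatibility I would appeal to the fact that the equivalence of Lemma \ref{pre-James} is induced by the sphere bundle isomorphism $S(\ell\alpha) \times_{\Bbb Z_2} S^{j\alpha} \cong \Bbb P^{\ell-1} \times S^j$ over $\Bbb P^{\ell-1}$, which preserves zero sections and is therefore covered by an isomorphism of the underlying $\Bbb Z_2$-principal double covers. Since $\tr_K$ is a Pontryagin--Thom type construction attached functorially to the double cover $S(\ell\alpha) \to \Bbb P^{\ell-1}$, naturality forces the square in the statement to commute. The main obstacle here is bookkeeping of shuffles: one must check that the equivariant shuffles implicit in the identification $(\Sigma^j K)^{\smsh 2} \cong S^j \smsh S^{j\alpha} \smsh K^{\smsh 2}$ on the top edge of the square match the shuffle implicit in $\Sigma^j K \smsh \Sigma^j K = \Sigma^{2j}(K \smsh K)$ on the bottom edge, so that no stray twist or sign is introduced. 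The underlying geometry (an isomorphism of finite covers) makes the result essentially formal once this matching is verified.
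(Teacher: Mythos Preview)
Your proposal is correct and follows essentially the same route as the paper. The only cosmetic difference is that the paper applies Lemma~\ref{pre-James} with $A=\Sigma^{j-1}K\smsh K$ (trivial action on the suspension coordinates, swap on $K\smsh K$) and then pulls the trivial $S^{j-1}$ and $S^{2j-1}$ factors out of the orbit construction, whereas you apply it with $A=K^{\smsh 2}$ and suspend afterward; these are interchangeable one-line moves. Your explicit use of the diagonalisation $(S^j)^{\smsh 2}\cong S^j\smsh S^{j\alpha}$ to identify $D_2^\ell(\Sigma^jK)$ with the left side of the first equivalence is a helpful clarification the paper leaves implicit, and your transfer argument---pulling the bundle isomorphism back along $S(\ell\alpha)\to\Bbb P^{\ell-1}$ to an isomorphism of double covers and invoking naturality---is exactly what the paper does.
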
 

\begin{proof} For the first part, use take $A = \Sigma^{j-1} K \smsh K$ with evident action and
apply \ref{pre-James}. As for the second part, the equivalence
$D^\ell_2(\Sigma^j K) \simeq \Sigma^{2j} D_2^\ell(K)$ arises from an isomorphism of bundles over
${\Bbb P}^{\ell-1}$ which can be pulled back along $S(\ell \alpha)$ to obtain an isomorphism of 
double covers of the total spaces. Compatibility with transfers now follows.
\end{proof}

\begin{rem} When $\ell = 1$ we can take $j =2$ and the conclusions  of
\ref{James} will hold.
\end{rem}

\section{A generalized Whitehead product}

The Whitehead product $w\:\Sigma K \smsh K \to \Sigma K$
is obtained from the cofiber sequence
$$
\begin{CD}
\Sigma K \smsh K @> W >>  \Sigma K \vee \Sigma K @> \subset >> \Sigma K \times \Sigma K
\end{CD}
$$
by taking the composition of $W$ with the fold map.

We will construct below an equivariant (weak) map
$$
\tilde \omega\: S^\alpha \smsh  (K \smsh K) \to \Sigma K
$$
where the target has trivial involution. Unequivariantly, this map
will coincide with $w$ up to homotopy.

\subsection*{Construction of $\tilde\omega$}  Recall that the {\it join} $K \ast K$ is the quotient space
of $K \times K \times [0,1]$ in which  $(x,y,0)$ and $(x',y,0)$ are identified, and
also $(x,y,1)$ and $(x,y',1)$ are identified for all $x,x',y,y' \in K$. The map
$(x,y,t) \mapsto (t,(x,y))$ defines a homotopy equivalence
$$
K \ast K \overset \sim \to \Sigma K \smsh K \, .
$$
If we give $K \ast K$ the involution $(x,y,t) \mapsto (y,x,1-t)$ and 
$S^\alpha \smsh K \smsh K$ the involution $(t,(x,y)) \mapsto (1-t,(y,x))$, then the displayed map is
is equivariant, so we have an equivariant weak equivalence. We henceforth identify
$$
K \ast K \,\, \simeq \,\, S^\alpha \smsh K \smsh K \, .
$$

Consider the map 
$$
K \ast K \to \Sigma (K \vee K)
$$
defined by the formula
$$
(x,y,t) \mapsto
\begin{cases}
(4t, x,*),  \quad & t \in [0,1/4],\\
(4t-1,*,y),  & t \in [1/4,1/2],\\
(3-4t,x,*), & t \in [1/2,3/4], \\
(4-4t,*,y), & t \in [3/4,1].
\end{cases}
$$
If we give $\Sigma (K \vee K)$ the involution which is trivial
on the suspension coordinate, but transposes summands (i.e., $(t,(x,y)) \mapsto (t,(y,x))$)
then the map is $\Bbb Z_2$-equivariant. If we compose with the fold map, we get
a $\Bbb Z_2$-equivariant map
$$
K \ast K \to \Sigma K\, ,
$$
which we will take the liberty of writing as an 
equivariant map
$$
\tilde \omega\:  S^\alpha \smsh (K \smsh K) \to \Sigma K \, ,
$$
even though the map $K \ast K \to \Sigma K$ does not obviously factor
through the equivariant weak equivalence $K \ast K \to S^\alpha \smsh (K \smsh K)$.

\subsection*{The map $\omega_\ell$}
For each $\ell \ge 1$,
we will define a map 
$$
\omega_\ell \: S(\ell \alpha)_+ \smsh_{\Bbb Z_2} S^\alpha \smsh (K \smsh K) 
\to \Sigma K \, .
$$
Let $S(\ell \alpha)_+ \to S^0$ be the map which collapses $S(\ell\alpha)$ to a
point. Smash this map with $\tilde \omega$. This results in a $\Bbb Z_2$-equivariant map
$$
S(\ell \alpha)_+ \smsh  (S^\alpha \smsh (K \smsh K)
\to \Sigma K \, .
$$
Finally, $\omega_\ell$ is the
factorization of this last map over orbits; here we
are using the fact that  target has trivial involution.

Let
$$
E_\ell \: \Sigma K \to \Omega^\ell \Sigma^\ell (\Sigma K)
$$
be the  adjoint to the identity.

\begin{lem} \label{EP=0} The composite $E_\ell\circ \omega_\ell$
has a preferred null homotopy. Furthermore, if $C$ denotes the cone on 
$S(\ell\alpha)_+\smsh_{\Bbb Z_2} (S^\alpha \smsh K^{\smsh 2})$, then 
the associated commutative square
$$
\xymatrix{
S(\ell\alpha)_+\smsh_{\Bbb Z_2} (S^\alpha \smsh K^{\smsh 2})
 \ar[r]^(.7){\omega_\ell} \ar[d]_{\cap} &
\Sigma K \ar[d]^{E_\ell}\\
C \ar[r] & \Omega^\ell \Sigma^\ell (\Sigma K)
}
$$
is $(4r+7)$-cocartesian when $K$ is $r$-connected.
\end{lem}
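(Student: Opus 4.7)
The proof has two parts: constructing the preferred null-homotopy of $E_\ell\circ\omega_\ell$, and bounding the cocartesianness of the resulting square.

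\textbf{Preferred null-homotopy.} The plan is to pass to adjoints: a null-homotopy of $E_\ell\circ\omega_\ell$ is equivalent to a null-homotopy of the $\ell$-fold suspension $\Sigma^\ell\omega_\ell\colon \Sigma^\ell V \to \Sigma^{\ell+1}K$, where $V = S(\ell\alpha)_+\smsh_{\Bbb Z_2}(S^\alpha\smsh K^{\smsh 2})$. Applying Lemma \ref{pre-James} with $A = S^\alpha\smsh K^{\smsh 2}$ identifies the source with $S(\ell\alpha)_+\smsh_{\Bbb Z_2}(S^{(\ell+1)\alpha}\smsh K^{\smsh 2})$, and under this identification $\Sigma^\ell\omega_\ell$ is built from the equivariant Whitehead product $\tilde\omega$ by fiberwise smashing with the James trivialization of $(\ell+1)\xi_\ell$. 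The classical fact that the ordinary Whitehead product $w\colon \Sigma K\smsh K\to\Sigma K$ suspends to zero, combined with the naturality of the collapse $S(\ell\alpha)_+\to S^0$ with respect to the James trivialization, then produces the null-homotopy. It is preferred because every choice in the construction, from the suspension-null-homotopy of $w$ to the trivialization of $(\ell+1)\xi_\ell$ used in Lemma \ref{pre-James}, is canonical.

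\textbf{Cocartesian estimate.} With the null-homotopy in hand, the square is defined and its homotopy pushout is the mapping cone $\mathrm{cof}(\omega_\ell)$. The plan is to recognize $\omega_\ell$ as the second-order EHP-type map associated to $E_\ell$, so that $\mathrm{cof}(\omega_\ell)$ appears as the second stage of a James-type (equivalently, May-operadic) filtration of $\Omega^\ell\Sigma^{\ell+1}K$. Under such an identification, the map $\mathrm{cof}(\omega_\ell)\to\Omega^\ell\Sigma^{\ell+1}K$ is the inclusion of the second stage, and its connectivity is governed by the third stage, a Thom-type construction involving $(\Sigma K)^{\smsh 3}$ over an appropriate $\Sigma_3$-equivariant configuration space in $\Bbb R^\ell$. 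Since $K$ is $r$-connected, $(\Sigma K)^{\smsh 3}$ is $(3r+5)$-connected, and the remaining contribution from the equivariant suspension coordinates in the third stage yields the bound $4r+7$.

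The main obstacle will be setting up the EHP/Snaith-type comparison precisely enough in this unstable range to justify both the identification of $\mathrm{cof}(\omega_\ell)$ with the second filtration stage and the exact value of the connectivity constant. Once the filtration picture is in place, a Blakers-Massey argument on the second-to-third stage transition converts the fiber-sequence information into the claimed cocartesianness of the square.
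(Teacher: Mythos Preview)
Your Part 2 outline is essentially the paper's argument: one identifies the cofiber of $\omega_\ell$ with the second stage $F_2(\Sigma K)$ of the May configuration-space filtration of $\Omega^\ell\Sigma^\ell(\Sigma K)$, and then the connectivity of the inclusion $F_2(\Sigma K) \hookrightarrow \Omega^\ell\Sigma^\ell(\Sigma K)$ gives the cocartesian bound. The paper makes the identification concrete via the pushout square
\[
\xymatrix{
S(\ell\alpha)_+ \smsh_{\Bbb Z_2} (\Sigma K)^{\vee 2} \ar[r] \ar[d] &
S(\ell\alpha)_+ \smsh_{\Bbb Z_2} (\Sigma K)^{\times 2} \ar[d] \\
\Sigma K \ar[r] & F_2(\Sigma K)
}
\]
together with the observation that the top row extends on the left to a cofiber sequence beginning with $S(\ell\alpha)_+\smsh_{\Bbb Z_2}(S^\alpha\smsh K^{\smsh 2})$; hence
\[
S(\ell\alpha)_+\smsh_{\Bbb Z_2}(S^\alpha\smsh K^{\smsh 2}) \xrightarrow{\ \omega_\ell\ } \Sigma K \longrightarrow F_2(\Sigma K)
\]
is itself a cofiber sequence. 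The crucial point is that this \emph{same} identification already dispatches Part 1: since $E_\ell$ factors as $\Sigma K \to F_2(\Sigma K) \to \Omega^\ell\Sigma^\ell(\Sigma K)$, the composite $E_\ell\circ\omega_\ell$ is two consecutive maps in a cofiber sequence followed by another map, and is therefore preferred null. Your separate treatment of Part 1 is not needed.

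More seriously, your Part 1 argument does not work as written. You invoke Lemma \ref{pre-James} to identify $\Sigma^\ell V$ with $S(\ell\alpha)_+\smsh_{\Bbb Z_2}(S^{(\ell+1)\alpha}\smsh K^{\smsh 2})$, but that lemma is stated for the specific exponent $j = 2^{\phi(\ell-1)}$, not for $j = \ell$. Concretely, the identification you want would require the bundle $\ell\xi_\ell$ over $\Bbb RP^{\ell-1}$ to be trivial, and its total Stiefel--Whitney class $(1+a)^\ell$ shows this fails whenever $\ell$ is not a power of two. For the same reason there is in general no ``James trivialization of $(\ell+1)\xi_\ell$''. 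So the adjunction-plus-periodicity route to the null-homotopy collapses, and you should instead read the null-homotopy off directly from the cofiber sequence above, exactly as the paper does. (Your hand-wave from $3r+5$ to $4r+7$ in the connectivity count also needs to be made precise, but that is secondary.)
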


\begin{proof} The second filtration  $F_2 (\Sigma K)$ of the configuration space
model for
$\Omega^\ell \Sigma^\ell (\Sigma K)$ sits in a pushout square
$$
\xymatrix{
S(\ell\alpha)_+ \smsh_{\Bbb Z_2} (\Sigma K)^{\vee 2}  \ar[r] \ar[d] &
S(\ell\alpha)_+ \smsh_{\Bbb Z_2} (\Sigma K)^{\times 2} \ar[d] \\
\Sigma K \ar[r] & F_2(\Sigma K) 
}
$$
in which the horizontal arrows are inclusions and the left vertical arrow
is the evident fold map (\cite{May}, \cite{Snaith}).
The top horizontal arrow sits in a cofiber sequence
$$
S(\ell\alpha)_+ \smsh_{\Bbb Z_2} (S^\alpha \smsh K^{\smsh 2}) \to
S(\ell\alpha)_+ \smsh_{\Bbb Z_2} (\Sigma K)^{\vee 2}\to
S(\ell\alpha)_+ \smsh_{\Bbb Z_2} (\Sigma K)^{\times 2}  
$$
where the first map is given by smashing $\tilde\omega$ with the identity of
$S(\ell \alpha)_+$ and then taking orbits. It follows that there is also
a cofiber sequence
$$
S(\ell\alpha)_+ \smsh_{\Bbb Z_2} (S^\alpha \smsh K^{\smsh 2})
\to \Sigma K \to F_2(\Sigma K)\, .
$$
Moreover, the composite
$$
S(\ell\alpha)_+ \smsh_{\Bbb Z_2} (S^\alpha \smsh K^{\smsh 2})
\to \Sigma K \to F_2(\Sigma K) \to \Omega^\ell\Sigma^\ell (\Sigma K)
$$
coincides with $E_\ell\circ \omega_\ell$,  and is therefore null-homotopic. The other part of
the lemma also follows from the observation that
the map $F_2(\Sigma K) \to \Omega^\ell\Sigma^\ell (\Sigma K)$
is $(4r+7)$-connected.
\end{proof}

\begin{cor}\label{useful} Let $j = 2^{\phi(\ell-1)}$, and
 use $\Sigma^j K = \Sigma^{j-1} (\Sigma K)$ in place of $\Sigma K$.
Then using \ref{James}, the map $\omega_\ell$ may be rewritten in the form
$$
\Sigma^{2j-1} D_2^\ell{K} \to \Sigma^j K \, .
$$
\end{cor}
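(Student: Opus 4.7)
The plan is to apply the natural map $\omega_\ell$ with $\Sigma^{j-1}K$ substituted for the space $K$, and then repackage the source using the James periodicity identifications of Corollary \ref{James}.

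Substituting $\Sigma^{j-1}K$ into the definition of $\omega_\ell$ produces a map
$$
S(\ell\alpha)_+ \smsh_{\Bbb Z_2}\bigl(S^\alpha \smsh (\Sigma^{j-1}K)^{\smsh 2}\bigr) \to \Sigma(\Sigma^{j-1}K) = \Sigma^j K,
$$
whose target is already in the required form, so the real task is to rewrite the source. I would first apply the equivariant shuffle
$(\Sigma^{j-1}K)^{\smsh 2} \cong S^{j-1}\smsh S^{j-1}\smsh K^{\smsh 2}$, under which $\Bbb Z_2$ swaps the two spheres and the two copies of $K$ simultaneously. The swap representation on $\Bbb R^{j-1}\oplus\Bbb R^{j-1}$ splits as the trivial representation on the diagonal plus the sign representation on the antidiagonal, which gives the equivariant homeomorphism
$$
S^{j-1}\smsh S^{j-1} \cong S^{j-1}\smsh S^{(j-1)\alpha}.
$$
Combining this with the distinguished $S^\alpha$ factor and using $S^\alpha\smsh S^{(j-1)\alpha}=S^{j\alpha}$ yields
$$
S^\alpha\smsh(\Sigma^{j-1}K)^{\smsh 2} \;\cong\; \Sigma^{j-1}\bigl(S^{j\alpha}\smsh K^{\smsh 2}\bigr),
$$
in which the outer $\Sigma^{j-1}$ carries the trivial $\Bbb Z_2$-action.

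Pulling this trivial suspension outside the Borel construction and then invoking the first equivalence of Corollary \ref{James} produces
$$
S(\ell\alpha)_+\smsh_{\Bbb Z_2}\bigl(S^\alpha\smsh(\Sigma^{j-1}K)^{\smsh 2}\bigr) \;\cong\; \Sigma^{j-1}\bigl(S(\ell\alpha)_+\smsh_{\Bbb Z_2}(S^{j\alpha}\smsh K^{\smsh 2})\bigr) \;\simeq\; \Sigma^{2j-1}D_2^\ell K,
$$
which is exactly the domain asserted in the corollary. The only step requiring any thought is the equivariant decomposition of the swap action on $S^{j-1}\smsh S^{j-1}$ as a trivial sphere smashed with an antipodal one; once that representation-theoretic identification is in place, the rewriting of $\omega_\ell$ is immediate from \ref{James}.
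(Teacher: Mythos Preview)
Your argument is correct and is exactly the computation the paper leaves implicit: substitute $\Sigma^{j-1}K$ for $K$ in $\omega_\ell$, use the equivariant splitting of the swap action on $S^{j-1}\smsh S^{j-1}$ into a trivial and a sign sphere, and then invoke the first equivalence of Corollary~\ref{James}. The paper gives no separate proof of this corollary, so you have simply written out the intended one-line deduction in full.
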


\section{Proof of Theorem \ref{first}}

Let $X = K \cup_\beta D^d$, and suppose $\Sigma^\ell \beta: S^{d+\ell-1} \to 
\Sigma^\ell K$ comes equipped with a null homotopy $\eta$. Then we have a map of pairs
$$
\phi(\eta,\beta)\: (D^d,S^{d-1}) \to (\Omega^\ell\Sigma^\ell K, K)
$$
Let $E_\ell\: K \to \Omega^\ell \Sigma^\ell K$ be adjoint to the identity,
and let
$$
h_\ell \: \Omega^\ell\Sigma^\ell K \to \Omega^\ell\Sigma^\ell D_2^{\ell}K
$$
be the Hopf invariant. Then $h_\ell\circ E_\ell$ is preferred null homotopic, so
$h_\ell$ may be considered as a map of pairs
$$
 (\Omega^\ell\Sigma^\ell K, K) \to (\Omega^\ell\Sigma^\ell D_2^\ell K, CK)\, .
 $$
 Then by taking quotients $h_\ell\circ \phi(\eta,\beta)$ induces a map
 $$
 f\: S^d \to \Omega^\ell\Sigma^\ell D_2^{\ell}K \, .
 $$
 By the material in \S2, the composite
 $$
 \begin{CD} 
 S^d @> f >> \Omega^\ell\Sigma^\ell D_2^{\ell}K \subset QD_2 K \to K \smsh K
 \end{CD}
 $$
 is an $S$-duality.

 Taking the adjoint of $f$, we obtain a map
 $$
 S^{d+\ell} \to \Sigma^\ell D_2^\ell K \, .
 $$
Suspend this map $(2j-\ell-1)$-times to obtain a map
$$
S^{d+2j-1} \to \Sigma^{2j-1} D_2^\ell K 
$$
Since $j = 2^{\phi(\ell-1)}$ we can apply Lemma \ref{useful} to compose this last
map with $\omega_\ell\:\Sigma^{2j-1}D_2^\ell K \to \Sigma^j K$ to get a map
$$
\gamma\:S^{d+2j-1} \to \Sigma^j K \, .
$$

Theorem \ref{first} is a direct consequence of the following.

\begin{prop} Let $Y = (\Sigma^j K) \cup_\gamma D^{d+2j}$ be the mapping cone of $\gamma$.
Then $Y$ is a Poincar\'e duality space of dimension $d+2j$ whose top cell splits off after $\ell$ suspensions.
\end{prop}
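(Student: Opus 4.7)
The plan is to apply the Poincar\'e duality criterion of Proposition \ref{criterion} to the pair $(\Sigma^j K, \gamma)$, feeding the input $\delta(\beta,\eta)$ through the construction by means of James periodicity. First I would construct the required null homotopy of $\Sigma^\ell\gamma$. By Lemma \ref{EP=0}, the composite $E_\ell \circ \omega_\ell$ comes with a preferred null homotopy; adjointly, this exhibits $\Sigma^\ell \omega_\ell$ as preferred null homotopic. Since $\gamma = \omega_\ell \circ \Sigma^{2j-\ell-1} g$, where $g$ is the adjoint of $f$, we thereby obtain a preferred null homotopy $\eta'$ of $\Sigma^\ell \gamma$. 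This immediately establishes that the top cell of $Y$ splits off after $\ell$ suspensions.

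With $\eta'$ in hand, Proposition \ref{criterion} reduces Poincar\'e duality for $Y$ to showing that
$$
\tr_{\Sigma^j K} \circ \delta(\gamma,\eta') \: S^{d+2j} \stableto \Sigma^j K \smsh \Sigma^j K
$$
is an $S$-duality. The key step is to identify $\delta(\gamma,\eta')$, via the James periodicity equivalence $D_2^\ell(\Sigma^j K) \simeq \Sigma^{2j} D_2^\ell K$ of Corollary \ref{James}, with $\Sigma^{2j}\delta(\beta,\eta)$. Granting this identification, the transfer-compatibility part of Corollary \ref{James} yields
$$
\tr_{\Sigma^j K} \circ \delta(\gamma,\eta') \,\simeq\, \Sigma^{2j}\bigl(\tr_K \circ \delta(\beta,\eta)\bigr),
$$
and $\tr_K \circ \delta(\beta,\eta)$ is an $S$-duality because $X = K\cup_\beta D^d$ is Poincar\'e (Proposition \ref{criterion} applied to $X$). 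Since suspension preserves $S$-duality, this finishes the proof.

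The main obstacle is the identification of $\delta(\gamma,\eta')$ with $\Sigma^{2j}\delta(\beta,\eta)$. By construction, $\delta(\gamma,\eta')$ is manufactured by applying the Hopf invariant $h_\ell$ to the pair $(\gamma,\eta')$ and passing to quotients. Since $\gamma$ factors through $\omega_\ell$ and $\eta'$ is precisely the null homotopy furnished by the factorization of $\omega_\ell$ through the second configuration space stage $F_2(\Sigma^j K)$ in the proof of Lemma \ref{EP=0}, the computation reduces to an EHP-type recognition: the stable Hopf invariant $h_\ell$ of the class represented by $\omega_\ell(x)$ equals, up to sign and the James equivalence, the original class $x$. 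I would establish this using the composition formula for the stable Hopf invariant proved in the Appendix, applied to the composite $\omega_\ell \circ \Sigma^{2j-\ell-1}g$, together with the fact from the proof of Lemma \ref{EP=0} that $\omega_\ell$ realizes the attaching map of the $F_2$-stage of the configuration space model for $\Omega^\ell\Sigma^\ell(\Sigma^j K)$. Careful bookkeeping of signs and of the naturality of the James equivalence from Corollary \ref{James} is the delicate point.
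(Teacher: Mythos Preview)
Your overall architecture matches the paper's proof exactly: use Lemma~\ref{EP=0} to split the top cell after $\ell$ suspensions, then invoke Proposition~\ref{criterion} and reduce Poincar\'e duality for $Y$ to showing that $\delta(\gamma,\eta')$, transported through the James equivalence of Corollary~\ref{James}, is the $2j$-fold suspension of the original $f$ (equivalently of $\delta(\beta,\eta)$), whereupon transfer-compatibility and $S$-duality for $X$ finish the argument. The paper simply asserts this identification; you are right that it is the only substantive point.

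One caution about your proposed tool for that identification: the composition formula in the Appendix is for the \emph{stable} Hopf invariant $h$, whereas the construction of $\delta(\gamma,\eta')$ here uses the $\ell$-truncated Hopf invariant $h_\ell\:\Omega^\ell\Sigma^\ell(\Sigma^jK)\to\Omega^\ell\Sigma^\ell D_2^\ell(\Sigma^jK)$. So the Appendix formula does not apply verbatim. The cleaner route (and the one the paper implicitly has in mind) is to use the cofiber sequence from the proof of Lemma~\ref{EP=0} directly: $\omega_\ell$ is the attaching map of $F_2(\Sigma^jK)$ over $\Sigma^jK$, and $h_\ell$ is, on $F_2$, the collapse to the cofiber $D_2^\ell(\Sigma^jK)$. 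Hence applying $h_\ell$ to the map of pairs determined by $(\gamma,\eta')$ recovers, on quotients, precisely the class in $D_2^\ell(\Sigma^jK)$ that $\gamma$ came from---namely $\Sigma^{2j-1}f$ fed through the James identification. That is the EHP relation $h_\ell\circ\omega_\ell\simeq\id$ at the level of cofibers, and it gives the identification $\delta(\gamma,\eta')\simeq\Sigma^{2j}f$ without invoking the stable composition formula. With this adjustment your argument is complete and coincides with the paper's.
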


\begin{proof} Recall that the attaching map $\gamma$ is defined so as to factor
through $\omega_\ell$. Furthermore, by \ref{EP=0} and \ref{useful} we have a commutative diagram
$$
\xymatrix{
\Sigma^{2j-1} D_2^\ell K \ar[r]^{\omega_\ell} \ar[d] &\Sigma^j K \ar[d]^{E_\ell} \\
C \ar[r] & \Omega^\ell\Sigma^\ell \Sigma^j K
}
$$ 
with $C$ contractible. Hence the top cell of $Y$ splits off after $\ell$ suspensions.
It remains to show that $Y$ satisfies Poincar\'e duality.

The diagram together with $\gamma$ yields maps of pairs
$$
(D^{d+2j},S^{d+2j-1}) \to 
(\Omega^\ell\Sigma^\ell \Sigma^j K,\Sigma^j K) \overset{h_\ell}\to 
(\Omega^\ell\Sigma^\ell D_2^\ell(\Sigma^j K),C\Sigma^j K) \, .
$$
The induced map of quotients determines a homotopy class of map
$$
S^{d+2j} \to \Omega^\ell \Sigma^\ell D_2^\ell(\Sigma^j K) \, .
$$
Taking the adjoint and using the identification $D_2^\ell(\Sigma^j K) \simeq \Sigma^{2j}D_2^\ell (K)$
of \ref{James}, we obtain a map
$$
S^{d+2j+\ell} \to \Sigma^{2j+\ell}D_2^\ell K 
$$
which is just the $2j$-fold suspension of the map $f\: S^{d+\ell} \to \Sigma^\ell D_2 K$.
Since the transfer applied to $f$ is an $S$-duality map, 
it now follows by \ref{criterion} that $Y = (\Sigma^j K) \cup_\gamma D^{d+2j}$
is a Poincar\'e duality space.
\end{proof}

\section{Proof of Theorems \ref{quadratic} and \ref{quadratic-refinement}}

Theorem \ref{quadratic} is a direct consequence of Theorem \ref{quadratic-refinement}. 
\medskip

{\flushleft \it Proof that (2) implies (1) in Theorem \ref{quadratic-refinement}.} Suppose that $X = (\Sigma^j K) \cup_\beta D^n$ is a Poincar\'e space such that $\Sigma^j \beta$ is null homotopic, where $j = 2^{\phi(\ell-1)}$.
 Then we have a commutative diagram
$$
\xymatrix{
S^{n+2j-1} \ar[r]\ar[d] & \Sigma^j K \ar[d]\\
D^{n+2j} \ar[r]         & \Omega^{\ell}\Sigma^{\ell} \Sigma^j K \, .
}
$$
We can then take the composition of the maps of pairs
$$
(D^{n+2j},S^{n+2j-1}) \to (\Omega^{\ell}\Sigma^{\ell} \Sigma^j K,\Sigma^j K) 
\overset{h_\ell} \to 
(\Omega^{\ell}\Sigma^{\ell} D_2(\Sigma^j K),C\Sigma^j K)
$$
where $h_\ell$ is the Hopf invariant.

Taking the adjunction and quotients, we obtain a stable map
$$
f\:S^{n+2j+\ell} \to \Sigma^\ell D_2(\Sigma^j K) \, .
$$
As in the proof of \ref{criterion}, the composition given by 
$f$ followed by the transfer is a Spanier-Whitehead duality.  

Using \ref{James}, we have $D_2^\ell(\Sigma^j K) \simeq \Sigma^{2j} D_2^\ell K$
and this equivalence is compatible with transfers. Therefore, $f$ becomes identified
with a stable map
$$
S^n \stableto D^\ell_2K
$$
which is an $\ell$-refined quadratic duality.
\medskip

{\flushleft \it Proof that (1) implies (2).} Given $f\:S^d \stableto D_2^\ell K$,
let $j$ be large enough so that $\Sigma^{2c-1} f$ exists as an ordinary map
$$
S^{d+2c-1} \to \Sigma^{2c-1} D_2^\ell K \, .
$$
We can assume that $c = e2^{\phi(\ell-1)}$ for some $e\ge 1$.
Compose now with $\omega_\ell$ to obtain a map
$$
\gamma\:S^{d+2c-1} \to \Sigma^c K
$$
and form its mapping cone $Y = (\Sigma^c K) \cup_{\gamma} D^{d+2c}$. By 
\ref{criterion}, we infer that $Y$ is a Poincar\'e space whose top cell splits off 
after $\ell$-suspensions. Theorem \ref{first} shows that $Y$ is $j$-periodic
with $j = 2^{\phi(\ell-1)}$.

\section{Appendix: the stable Hopf invariant composition formula}

In this section we will recall Crabb's definition \cite[p.\ 62]{Crabb} of the stable Hopf invariant
and use it to derive the formula  for the Hopf invariant of a composition.
We are including this material for the sake of completeness (as far as I know, the composition formula does not appear anywhere in the literature).

For based $\Bbb Z_2$-CW complexes $A$ and $B$, let
$$
[A,B]^{\Bbb Z_2}
$$
denote the associated set of equivariant  homotopy classes of based $\Bbb Z_2$-maps. Let
$$
\{A,B\}^{\Bbb Z_2}
$$
be the colimit of $[S^W\smsh A,S^W\smsh B]^{\Bbb Z_2}$, where $W$ ranges over
a complete universe of representations of $\Bbb Z_2$. Then $\{A,B\}^{\Bbb Z_2}$
is an abelian group.

If $X$ and $Y$ is a based CW complexes, there is a canonically split short
exact sequence of abelian groups
$$
\begin{CD}
0 @>>> \{X,D_2Y\} @>\delta>> \{X,Y\smsh Y\}^{\Bbb Z_2} @>\rho >> \{X,Y\} @>>> 0
\end{CD}
$$
where the map $\delta$ is induced by the transfer and $\rho$ assigns to an equivariant map
$f\:S^W \smsh X \to S^W \smsh Y \smsh Y$ the map 
$$
f^{\Bbb Z_p}\:S^{W'} \smsh X \to S^{W'} \smsh Y
$$
given by taking fixed points. Here $W'$ is the fixed set of $W$, $X$ asnd $Y$ are given the trivial
action and $Y\smsh Y$ has the permutation action. The splitting for $\rho$ is given
by assigning to $X \stableto Y$ its composition with the reduced diagonal map
$\Delta_Y \: Y \to Y\smsh Y$.

The {\it stable Hopf invariant} of a stable map $f\: X \stableto Y$ is the unique
homotopy class $h(f)\: X \stableto D_2Y$ such that
$$
\delta(f) \,\, := \,\, (f\smsh f)\circ \Delta_X  \,\, - \,\,  \Delta_Y \circ f   \, .
$$

\begin{prop} Let $f\: X \stableto Y$ and $g\: Y \stableto Z$ be stable maps.
Then
$$
h(g\circ f) \,\, = \,\, D_2(g) \circ h(f)  \,\, +  \,\, h(g) \circ f\, .
$$
\end{prop}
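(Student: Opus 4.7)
The strategy is to exploit the injectivity of $\delta$ in the canonically split short exact sequence
$$
0 \to \{X,D_2Z\} \overset{\delta}{\to} \{X,Z\smsh Z\}^{\Bbb Z_2} \overset{\rho}{\to} \{X,Z\} \to 0.
$$
Since $\delta$ is injective, to prove the composition formula it is enough to show that applying $\delta$ to both sides of the identity $h(g\circ f) = D_2(g)\circ h(f) + h(g)\circ f$ produces equal elements of $\{X, Z\smsh Z\}^{\Bbb Z_2}$. Working with $\delta$ turns each Hopf invariant back into the explicit defining expression $(\,\cdot\,\smsh\,\cdot\,)\circ\Delta - \Delta\circ(\,\cdot\,)$, which is manipulable by formal calculation.

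First I would unwind the left-hand side: by the definition of $h$,
$$
\delta(h(g\circ f)) \,=\, \bigl((g\circ f)\smsh(g\circ f)\bigr)\circ \Delta_X \,-\, \Delta_Z\circ g\circ f \,=\, (g\smsh g)\circ(f\smsh f)\circ\Delta_X \,-\, \Delta_Z\circ g\circ f.
$$
Next I would handle each summand of the right-hand side. For the first summand, naturality of the transfer $\mathrm{tr}\colon D_2(-) \to (-)\smsh(-)$ with respect to the stable map $g$ gives the commutative square relating $\mathrm{tr}\circ D_2(g)$ with $(g\smsh g)\circ \mathrm{tr}$; since $\delta$ is induced by the transfer this shows
$$
\delta\bigl(D_2(g)\circ h(f)\bigr) \,=\, (g\smsh g)\circ \delta(h(f)) \,=\, (g\smsh g)\circ\bigl((f\smsh f)\circ\Delta_X - \Delta_Y\circ f\bigr).
$$
For the second summand, $\delta$ visibly commutes with precomposition by the non-equivariant stable map $f$ (since $X$ carries the trivial action, $f$ is already an equivariant map into $Y$ with trivial action, and transfer is natural), so
$$
\delta\bigl(h(g)\circ f\bigr) \,=\, \delta(h(g))\circ f \,=\, (g\smsh g)\circ\Delta_Y\circ f - \Delta_Z\circ g\circ f.
$$

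Adding the two expressions above causes the terms $\pm (g\smsh g)\circ\Delta_Y\circ f$ to cancel, leaving exactly
$$
(g\smsh g)\circ (f\smsh f)\circ\Delta_X - \Delta_Z\circ g\circ f \,=\, \delta(h(g\circ f)),
$$
which finishes the proof by injectivity of $\delta$. The only delicate point — and what I expect to be the main thing requiring care — is the appeal to naturality of the transfer when precomposing with the non-equivariant map $f$ (equivalently, the naturality of $\delta$ in the first variable): one must be explicit that $\delta$ is induced by a natural transformation of functors in stable equivariant homotopy, so that $\delta(u\circ f) = \delta(u)\circ f$ and $\delta(D_2(g)\circ u) = (g\smsh g)\circ\delta(u)$ hold at the level of homotopy classes. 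With those naturality statements in hand the formula reduces to the algebraic cancellation carried out above.
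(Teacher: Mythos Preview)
Your proof is correct and follows essentially the same route as the paper: apply $\delta$ to both sides, expand each term via the defining relation $\delta(h(-)) = (-\smsh -)\circ\Delta - \Delta\circ(-)$, observe that the cross term $(g\smsh g)\circ\Delta_Y\circ f$ cancels, and conclude by injectivity of $\delta$. You are slightly more explicit than the paper about the naturality of $\delta$ under precomposition by $f$ and under $D_2(g)$, but the underlying argument is the same.
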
 

\begin{proof} Consider the following maps
\begin{enumerate}
\item $(g\smsh g)\circ (f\smsh f) \circ \Delta_X$,
\item $(g\smsh g) \circ \Delta_Y\circ f$,
\item $\Delta_Z \circ g\circ f$.
\end{enumerate}
Then
\begin{align*}
\delta(h(g\circ f)) \,\, &= \,\, (1) - (3)\, , \\
 &= \,\, (1) - (2) + (2) -  (3) \, ,\\
 & = \,\, \delta(D_2(g)\circ h(f))  +  \delta(h(g) \circ f)\, , \\
 & = \,\, \delta(D_2(g)\circ h(f))  +  h(g) \circ f)\, .
 \end{align*}
Now use the fact that $\delta$ is injective to complete the proof.
\end{proof}



\begin{thebibliography}{C-M-T}
\bibliographystyle{invent}

\bibitem[A]{Adams}%
Adams, J.~F.: {On the non-existence of elements of Hopf invariant one.}
 \newblock {\it Ann. of Math.} {\bf 7}, 220--104 (1960)

\bibitem[BS]{B-S}%
Boardman, J.~M., Steer, B.: On Hopf invariants.
\newblock {\it Comment. Math. Helv.\rm} {\bf 42}, 180--221 (1967)

\bibitem[C]{Crabb}%
Crabb, M.C.: {${\Bbb Z}/2$-homotopy theory}.
\newblock London Mathematical Society Lecture Note Series, 44. 
\newblock Cambridge University Press, 1980.


\bibitem[H]{Hodgson}%
Hodgson, J. P.~E.: {\it Question appearing the Problem Session}.
\newblock In: {Proceedings of the Northwestern Homotopy Theory Conference
  (1982), Contemp. Math. \bf 19\rm}, p.~448.
\newblock AMS 1983

\bibitem[HV]{HV}%
Hausmann, J.~C., Vogel P.: {Geometry on PoincarŽ spaces}. 
\newblock Mathematical Notes, 41. Princeton University Press, 1993.


\bibitem[J]{James}%
James, I.~M.:{Cross-sections of Stiefel manifolds}. 
\newblock{\it Proc. London Math. Soc.} {\bf 8} 536--547 (1958).



\bibitem[K1]{Klein_haef}%
Klein, J.~R.: {Poincar\'e embeddings and fiberwise homotopy theory}.
\newblock {\it Topology} {\bf 38}, 597--620 (1999)

\bibitem[K2]{Klein_compress}%
Klein, J.~R.: {Embedding, compression and fiberwise homotopy theory}.
 \newblock {\it Alg. \& Geom. Topology} {\bf 2}, 311--336 (2002)

\bibitem[Ku]{Kuhn}%
Kuhn, N.: {The geometry of the James-Hopf maps}.
\newblock {\it Pacific J. Math.\rm} {\bf 102}, 397--412 (1982)

\bibitem[KR]{Klein-Richter}%
Klein, J.R., Richter W.: Poincar\'e duality and periodicity.
\newblock arXiv preprint (http:/\!\!/arxiv.org/pdf/0707.1353.pdf)


\bibitem[Mh]{Mahowald}%
Mahowald, M.: {A short proof of the James periodicity of $\pi \sb{k+p}(V\sb{k+m,m})$}. 
\newblock {\it Proc. Amer. Math. Soc.} {\bf 16} p.\ 512 (1965) 

\bibitem[My]{May}%
May, J.~P.:{The geometry of iterated loop spaces}.
\newblock (LNM Vol.~271).
\newblock Springer, 1972

\bibitem[Mi]{Milgram}%
Milgram, R.~J.: {Unstable homotopy from the stable point of view}.
\newblock (LNM, Vol.~368).
\newblock Springer 1974

\bibitem[R]{Ranicki}%
Ranicki, A.: {The algebraic theory of surgery. II Applications to topology}.
\newblock {\it Proc. London Math. Soc. \rm} {\bf 40}, 193--283 (1980)

\bibitem[S]{Snaith}%
Snaith, V.P.: {A stable decomposition of $\Omega \sp{n}S\sp{n}X$}.
\newblock{\it J. London Math. Soc.\rm} {\bf 7}, 577--583 (1974). 


\bibitem[W1]{Wall3}%
Wall, C.~T.~C.: {Poincar\'e complexes I}.
\newblock {\it Ann. of Math.\rm} {\bf 86}, 213--245 (1967)

\bibitem[W2]{Wall}%
Wall, C.~T.~C.: {Surgery on Compact Manifolds}.
\newblock Academic Press 1970


\end{thebibliography}


\end{document}